\newtheorem{thm}{Theorem}
\newtheorem{cor}[thm]{Corollary}
\theoremstyle{definition}
\newtheorem{ex}[thm]{Example}
\def\R{\mathbb{R}}
\def\N{\mathbb{N}}
\author[A. Bahyrycz]{Anna Bahyrycz}
\email[A. Bahyrycz]{bah@up.krakow.pl}
\address{Department of Mathematics, Pedagogical University of Cracow, Cracow, Poland}
\author[Zs. Páles]{Zsolt P\'ales}
\email[Zs. Páles]{pales@science.unideb.hu}
\address{Institute of Mathematics, University of Debrecen, Hungary}
\author[M. Piszczek]{Magdalena Piszczek}
\email[M. Piszczek]{magdap@up.krakow.pl}
\address{Department of Mathematics, Pedagogical University of Cracow, Cracow, Poland}
\title{Asymptotic stability of the Cauchy and Jensen functional equations}
\thanks{This research of the second author was supported by the 
Hungarian Scientific Research Fund (OTKA) Grant K111651.}
\keywords{functional equation; stability; asymptotic stability; metric group}
\date{\today}
\begin{document}
\begin{abstract}
The aim of this note is to investigate the asymptotic stability behaviour of the Cauchy and Jensen functional 
equations. Our main results show that if these equations hold for large arguments with small error, then they are 
also valid everywhere with a new error term which is a constant multiple of the original error term. As consequences, 
we also obtain results of hyperstability character for these two functional equations.  
\end{abstract}

\maketitle

\section{Introduction}

The stability theory of the classical Cauchy and Jensen functional equations has attracted a~lot of attention in the 
last decades. Historically, this theory was stimulated by a question of S.\ M.\ Ulam \cite{Ula60}, though the first 
result of this kind is due to Gy.\ P\'olya and G.\ Szeg\H{o} \cite{PolSze25}. The majority of the results concentrates 
on proving that if a function satisfies a given functional equation approximately, then it approximates a function which 
is the exact solution of the given functional equation. To demonstrate this phenomenon, we recall D. H. Hyers' 
classical theorem (\cite{Hye41a}) which states that if, $X$ is a linear space and $Y$ is a Banach space and, for some 
$\varepsilon\geq0$, a~function $f\colon X\to Y$ is $\varepsilon$-additive, i.e., 
\begin{equation*}
 \|f(x)+f(y)-f(x+y)\|\leq\varepsilon \qquad(x,y\in X),
\end{equation*}
then there exists an additive function $g\colon X\to Y$ such that $f$ is $\varepsilon$-close to $g$, i.e.,
\begin{equation*}
 \|f(x)-g(x)\|\leq\varepsilon  \qquad (x\in X).
\end{equation*}
Motivated by this result several contributions have been obtained since then (\cite{Aok50,Bou51a,Ras78,Mal08}). 
For the interested reader, we recommend the following books and surveys: Ger \cite{Ger94b}, Forti \cite{For95}, 
Czerwik \cite{Cze02}, Hyers, Isac and Rassias \cite{HyeIsaRas98}. There are at least four significantly different 
approaches to obtain stability theorems. The first one, the so-called direct method, or iterative method was already 
invented by Hyers in \cite{Hye41a} (see also \cite{BarVol02}, \cite{For04}, \cite{PalVolLuc98}, \cite{Sza13a}). Results 
using the technique of invariant means (over amenable semigroups) were first proved by L.\ Székelyhidi \cite{Sze87c} 
(see also \cite{Bad93}, \cite{BadGerPal03}, \cite{BadPalSze99}). The third method is to use variants of the 
Hahn--Banach separation theorem or sandwich theorems, or more generally selections theorems, see \cite{Pal98b}, 
\cite{NikPalWas99}. The most recently discovered method is to use fixed point theorems suggested by L.\ C\u{a}dariu and 
V.\ Radu \cite{CadRad02}, \cite{CadRad03} (see also \cite{BrzChuPal11}).

In the present note, we do not apply any of the general patterns to obtain stability theorems. Instead, we assume that 
the stability of the functional equation holds only for large values of the variables and we deduce that the stability 
holds on the entire domain. Such results explicitely or implicitely have already been obtained by S.-M.\ Jung 
\cite{Jun98}, by L.\ Losonczi \cite{Los96}, and by F.\ Skof \cite{Sko83c}. To formulate the main results of this paper, 
the most convenient structures for the domain and the codomain of the given functions are metric abelian groups. A 
triple $(X,+, d)$ is called a~metric abelian group if $(X, +)$ is an abelian group which is equipped with a translation 
invariant metric $d$. In this case we define $\|x\|_d := d(x, 0)$ and we call $\| \cdot \|_d$ the norm induced by the 
metric $d$. Then $\| \cdot \|_d$ is an even subadditive function on $X$. The standard example for a metric abelian group 
is the additive group of a normed space. We note that, in general, the norm on a metric abelian group is not 
necessarily positively homogeneous. By the subadditivity of the norm, the inequality $\|2x\|_d\leq2\|x\|_d$ is always 
valid, however the equality may fail.

\section{Results on the stability}

\begin{thm}	\label{t1}
Let $(X,+, d)$ and $(Y,+,\rho)$ be metric abelian groups such that $X$ is
unbounded by~$d$. Let $\varepsilon \geq 0$ and assume that $f \colon  X \to Y$ possesses the
following asymptotic stability property
\begin{equation}\label{e1t1}
\limsup_{\min(\|x\|_d, \|y\|_d)\to \infty} \|f(x+y)-f(x)-f(y)\|_{\rho}\leq \varepsilon,
\end{equation}
 then
\begin{equation}\label{e2t1}
\|f(x+y)-f(x)-f(y)\|_{\rho}\leq 5 \varepsilon \qquad \mbox{for all } x,y \in X.
\end{equation}
Furthermore, provided that \eqref{e1t1} holds, the constant $5\varepsilon$ is the smallest 
possible in \eqref{e2t1}.
\end{thm}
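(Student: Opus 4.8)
The core idea is to control the Cauchy difference $F(x,y):=f(x+y)-f(x)-f(y)$ everywhere by splitting an arbitrary pair $(x,y)$ into a combination of pairs both of whose entries are large (in norm), on which the hypothesis \eqref{e1t1} gives us (almost) the bound $\varepsilon$. To do this cleanly, I would first introduce an auxiliary quantity
\begin{equation*}
c:=\limsup_{\min(\|x\|_d,\|y\|_d)\to\infty}\|F(x,y)\|_\rho\le\varepsilon,
\end{equation*}
so that for every $\delta>0$ there is an $R=R(\delta)$ with $\|F(x,y)\|_\rho\le c+\delta$ whenever $\min(\|x\|_d,\|y\|_d)\ge R$. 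Since $X$ is unbounded, we may pick a fixed element $z$ with $\|z\|_d$ as large as we like. I then plan to exploit the elementary ``cocycle'' identity for the Cauchy difference: for all $u,v,w\in X$,
\begin{equation*}
F(u,v)+F(u+v,w)=F(v,w)+F(u,v+w).
\end{equation*}

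**Key steps.** Fix arbitrary $x,y\in X$ and $\delta>0$. Choose $z$ with $\|z\|_d\ge R$ large enough that also $\|x+z\|_d\ge R$, $\|y+z\|_d\ge R$, $\|x+y+z\|_d\ge R$, $\|x+y+2z\|_d\ge R$, $\|2z\|_d\ge R$, etc.\ — this is possible because a translate of an unbounded set stays unbounded, so finitely many such constraints can be met simultaneously by taking $\|z\|_d$ large. The strategy is to write $F(x,y)$ as a telescoping sum/difference of several instances of $F$ evaluated at pairs that, after the shift by $z$ (or $2z$), have both coordinates of norm $\ge R$. Concretely, I would apply the cocycle identity with suitable choices to obtain something of the shape
\begin{equation*}
F(x,y)=F(x,y+z)+F(y,z)-F(x+y,z),
\end{equation*}
which already handles everything except that $F(x,y+z)$ still has a possibly small first argument $x$; a second application of the same identity (shifting $x$ by $z$ as well, i.e.\ writing $x=(x+z)+(-z)$ appropriately, or symmetrising) reduces that term to combinations of $F$-values with both entries large. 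Counting the instances carefully, one gets $\|F(x,y)\|_\rho\le 5(c+\delta)\le 5(\varepsilon+\delta)$, and letting $\delta\to0$ yields \eqref{e2t1}. The number $5$ should emerge as the exact number of ``large-argument'' $F$-terms the decomposition requires.

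**Sharpness.** For the last assertion, I would construct, for each $\varepsilon\ge0$, a metric abelian group and a function $f$ satisfying \eqref{e1t1} for which $\sup_{x,y}\|F(x,y)\|_\rho=5\varepsilon$ is attained (or approached arbitrarily closely). A natural candidate is to take $X=Y=\mathbb{Z}$ (or $\mathbb{R}$) with a bounded, translation-invariant metric such as $d(x,y)=\min(|x-y|,1)$ on a suitable group, or to use a bounded-range perturbation $f=g+b$ of an additive $g$ where $b$ is a bounded function supported near $0$ so that $F(x,y)=b(x+y)-b(x)-b(y)$ vanishes asymptotically but takes the extremal value $5\varepsilon$ at a cleverly chosen pair $(x,y)$. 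The construction must be reverse-engineered from the five terms appearing in the decomposition above: one wants $b$ to realize the worst case in each of the five slots simultaneously with the right signs.

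**Main obstacle.** The delicate point is the bookkeeping in the decomposition: finding a chain of cocycle identities that expresses $F(x,y)$ using the \emph{minimum} number of large-argument terms, and verifying that each argument can genuinely be forced to have norm $\ge R$ by a single large shift $z$ (remembering that the norm on a metric group need \emph{not} be homogeneous, so $\|2z\|_d$ large does not follow from $\|z\|_d$ large — one may have to shift by $z$ and by a second large, ``independent'' element, or argue directly that $\|x+z\|_d\to\infty$ as $\|z\|_d\to\infty$ for fixed $x$ via the reverse triangle inequality $\|x+z\|_d\ge\|z\|_d-\|x\|_d$). Matching this count to the sharpness example is the second half of the difficulty: the extremal $f$ has to be tuned so that none of the five contributions can be avoided.
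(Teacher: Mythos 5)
Your overall strategy---telescoping the Cauchy difference $F(x,y)=f(x+y)-f(x)-f(y)$ into finitely many instances of $F$ in which \emph{both} arguments have large norm, using auxiliary large elements and the reverse triangle inequality---is the same as the paper's, and your remark that the non-homogeneity of $\|\cdot\|_d$ forces one to use two independent large auxiliary elements rather than $z$ and $2z$ is exactly right. However, the decomposition you actually write down does not do the job, and the step you leave to ``careful counting'' is precisely the heart of the theorem. In $F(x,y)=F(x,y+z)+F(y,z)-F(x+y,z)$ the hypothesis \eqref{e1t1} controls a term only when \emph{both} of its arguments are large, so besides $F(x,y+z)$ also $F(y,z)$ and $F(x+y,z)$ are uncontrolled, since $y$ and $x+y$ are arbitrary and may be small; you flag only one of the three. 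Repairing each bad term by a further cocycle identity costs about three controlled terms apiece, so a straightforward completion of your route gives a bound of roughly $9\varepsilon$, not $5\varepsilon$. What is needed, and what the paper supplies, is an explicit five-term splitting,
\begin{equation*}
F(x,y)=-F(x-u,u)-F(y-v,v)+F(x-u,y-v)+F(u,v)+F(x+y-u-v,\,u+v),
\end{equation*}
where $u$ is chosen with $\|u\|_d\geq r+\|x\|_d$ and then $v$ with $\|v\|_d\geq r+\|x\|_d+\|y\|_d+\|u\|_d$, so that $\min\big(\|u\|_d,\|v\|_d,\|u+v\|_d,\|x-u\|_d,\|y-v\|_d,\|x+y-u-v\|_d\big)\geq r$; without such a splitting the constant $5$ is only asserted, not proved.

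The sharpness half is also not established, and both routes you sketch would fail. Passing to $\mathbb{Z}$ or $\R$ with a bounded translation-invariant metric violates the standing hypothesis that $X$ is unbounded (the claim is optimality for the given $X$). The ansatz $f=g+b$ with $g$ additive and $b$ bounded and supported near $0$ cannot reach $5\varepsilon$ either: for large $x,y$ one has $F(x,y)=b(x+y)$, and every point of the support of $b$ is a sum of two large elements, so \eqref{e1t1} forces $\sup|b|\leq\varepsilon$, whence $\sup_{x,y}|F(x,y)|\leq 3\sup|b|\leq3\varepsilon<5\varepsilon$; moreover, if $F$ really ``vanished asymptotically'' then \eqref{e1t1} would hold with $0$ in place of $\varepsilon$ and the first part of the theorem would make $f$ additive, so the supremum would be $0$. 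The extremal example must have asymptotic defect exactly $\varepsilon$ and must not be an additive-plus-vanishing perturbation: the paper takes $Y=\R$ and $f\equiv\varepsilon$ on $X\setminus\{x_0\}$, $f(x_0)=3\varepsilon$ for some $x_0\neq0$; then $|f(x+y)-f(x)-f(y)|=\varepsilon$ whenever $\|x\|_d,\|y\|_d>\|x_0\|_d$, while $|f(2x_0)-2f(x_0)|=5\varepsilon$.
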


\begin{proof}
Let $\eta >\varepsilon$ be arbitrary. Then, by the asymptotic stability property \eqref{e1t1} of $f$, 
there exists $r>0$ such that, for all $x,y\in X$ with $\|x\|_d\geq r$, $\|y\|_d\geq r$,
\begin{equation}\label{eta}
 \|f(x+y)-f(x)-f(y)\|_{\rho}< \eta.
\end{equation}
Let $x,y\in X$ be fixed. Using the unboundedness of $X$, choose $u\in X$ first and then $v\in X$ such that
\[
   \|u\|_d \geq r+\|x\|_d \qquad\mbox{and}\qquad
   \|v\|_d \geq r+\|x\|_d+ \|y\|_d + \|u\|_d.
\]
Then, one can easily see that
\[
 \min\big(\|u\|_d,\|v\|_d,\|u+v\|_d,\|x-u\|_d,\|y-v\|_d,\|x+y-u-v\|_d\big)\geq r.
\]
Applying \eqref{eta} five times, we get
\begin{align*}
\|f(x-u)+f(u)-f(x)\|_{\rho}< &\eta, \\
\|f(y-v)+f(v)-f(y)\|_{\rho}< &\eta, \\
\|f(x+y-u-v)-f(x-u)-f(y-v)\|_{\rho}< &\eta, \\
\|f(u+v)-f(u)-f(v)\|_{\rho}< &\eta, \\
\|f(x+y)-f(x+y-u-v)-f(u+v)\|_{\rho}< &\eta.
\end{align*}
By, the triangle inequality, we obtain
\begin{align*}
\|f(x+y)-f(x)-f(y)\|_{\rho}
&\leq
\|f(x-u)+f(u)-f(x)\|_{\rho}\\
&\quad \mbox{}+\|f(y-v)+f(v)-f(y)\|_{\rho} \\
&\quad \mbox{}+\|f(x+y-u-v)-f(x-u)-f(y-v)\|_{\rho} \\
&\quad \mbox{}+\|f(u+v)-f(u)-f(v)\|_{\rho} \\
&\quad \mbox{}+\|f(x+y)-f(x+y-u-v)-f(u+v)\|_{\rho}\\
&< 5\eta.
\end{align*}
Taking the limit $\eta \to \varepsilon$ in the last inequality, we arrive at \eqref{e2t1}, which was to be proved.

To show that $5\varepsilon$ is the best constant in \eqref{e2t1}, let $x_0\in X\setminus\{0\}$ 
be an arbitrary element and define a function $f\colon X\to \R$ as follows:
\[
f(x)=\begin{cases}
\varepsilon, & \mbox{for } x\in X\setminus\{x_0\},\\
3\varepsilon, & \mbox{for } x=x_0.
\end{cases}
\]
Then, for all $x,y\in X$ with $\|x\|_d, \|y\|_d > \|x_0\|_d$, we have $f(x)=f(y)=\varepsilon$ and 
$f(x+y)\in\{\varepsilon,3\varepsilon\}$. Hence, for large values of $x$ and $y$,
\[
|f(x+y)-f(x)-f(y)|=\varepsilon,
\]
which implies the validity of \eqref{e1t1}. On the other hand, using that $2x_0\neq x_0$, we obtain
\[
\sup_{x,y\in X}|f(x+y)-f(x)-f(y)|\geq|f(2x_0)-2f(x_0)|=5\varepsilon,
\]
proving that the constant on the right hand side of \eqref{e2t1} cannot be smaller than $5\varepsilon$.
\end{proof}

Our next result is related to the asymptotic stability of the Jensen functional equation.

\begin{thm}	    \label{t2}
Let $(X,+, d)$ and $(Y,+, \rho)$ be metric abelian groups such that $X$ is
uniquely 2-divisible and unbounded by $d$. Let $\varepsilon \geq 0$ and assume that
$f \colon X \to  Y$ possesses the following asymptotic stability property
\begin{equation}\label{e1t2}
\limsup_{\min(\|x\|_d, \|y\|_d)\to \infty} \Big\|2f\Big(\frac{x+y}{2}\Big)-f(x)-f(y)\Big\|_{\rho}\leq \varepsilon,
\end{equation}
 then
\begin{equation}\label{e2t2}
\Big\|4f\Big(\frac{x+y}{2}\Big)-2f(x)-2f(y)\Big\|_{\rho}\leq 4 \varepsilon \qquad \mbox{for all } x,y \in X.
\end{equation}
Furthermore, provided that \eqref{e1t2} holds, the constant $4\varepsilon$ is the smallest 
possible in \eqref{e2t2}.
\end{thm}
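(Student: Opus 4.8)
The plan is to argue exactly as in the proof of Theorem~\ref{t1}. Fix $\eta>\varepsilon$; by the asymptotic stability property \eqref{e1t2} there is $r>0$ with
\[
\Big\|2f\Big(\frac{x+y}{2}\Big)-f(x)-f(y)\Big\|_\rho<\eta
\qquad(\|x\|_d\ge r,\ \|y\|_d\ge r).
\]
Given arbitrary $x,y\in X$, the goal is then to write the left-hand side of \eqref{e2t2} as an algebraic combination of finitely many Jensen differences of $f$, each evaluated only at points of $d$-norm at least $r$. Since the left-hand side of \eqref{e2t2} is twice the Jensen difference at $(x,y)$ and the target constant is $4\varepsilon$, one should aim for a combination of exactly four such Jensen differences.

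The identity I would use is that, for all $x,y,u\in X$,
\begin{align*}
4f\Big(\frac{x+y}{2}\Big)-2f(x)-2f(y)
&=\Big(2f\Big(\frac{x+y}{2}\Big)-f(x+u)-f(y-u)\Big)\\
&\quad+\Big(2f\Big(\frac{x+y}{2}\Big)-f(x-u)-f(y+u)\Big)\\
&\quad-\big(2f(x)-f(x+u)-f(x-u)\big)\\
&\quad-\big(2f(y)-f(y+u)-f(y-u)\big),
\end{align*}
which holds because the eight terms $f(x\pm u)$, $f(y\pm u)$ cancel in pairs. Moreover, using that $X$ is uniquely $2$-divisible, one checks that
\[
\frac{(x+u)+(y-u)}{2}=\frac{(x-u)+(y+u)}{2}=\frac{x+y}{2},\qquad
\frac{(x+u)+(x-u)}{2}=x,\qquad
\frac{(y+u)+(y-u)}{2}=y,
\]
so each of the four parenthesised expressions is genuinely a Jensen difference of $f$. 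Using that $X$ is unbounded by $d$, pick $u\in X$ with $\|u\|_d\ge r+\|x\|_d+\|y\|_d$; then, by the evenness and subadditivity of $\|\cdot\|_d$, the four points $x+u,\ x-u,\ y+u,\ y-u$ all have $d$-norm at least $r$, so each of the four Jensen differences above has $\rho$-norm less than $\eta$. The triangle inequality gives $\big\|4f\big(\frac{x+y}{2}\big)-2f(x)-2f(y)\big\|_\rho<4\eta$, and letting $\eta\to\varepsilon$ yields \eqref{e2t2}.

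For the optimality of $4\varepsilon$ I would adapt the extremal example of Theorem~\ref{t1}, this time perturbing $f$ at three points, one of which is the midpoint of the other two. Fix $t\in X\setminus\{0\}$; since $X$ is uniquely $2$-divisible, the points $0,t,-t$ are pairwise distinct. Define $f\colon X\to\R$ by $f(0)=\frac{\varepsilon}{2}$, $f(t)=f(-t)=-\frac{\varepsilon}{2}$, and $f(x)=0$ for all other $x$. If $\|x\|_d,\|y\|_d>\|t\|_d$, then $f(x)=f(y)=0$ while $f\big(\frac{x+y}{2}\big)\in\big\{0,\frac{\varepsilon}{2},-\frac{\varepsilon}{2}\big\}$, hence the Jensen difference at $(x,y)$ has norm at most $\varepsilon$, so \eqref{e1t2} holds. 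On the other hand, taking $x=t$ and $y=-t$, so that $\frac{x+y}{2}=0$, we obtain
\[
\Big|4f\Big(\frac{x+y}{2}\Big)-2f(x)-2f(y)\Big|
=\big|4f(0)-2f(t)-2f(-t)\big|
=|2\varepsilon+\varepsilon+\varepsilon|=4\varepsilon,
\]
showing that the constant on the right-hand side of \eqref{e2t2} cannot be lowered.

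The only nonroutine step is spotting the four-term decomposition displayed above; everything else repeats the argument of Theorem~\ref{t1} almost verbatim. A minor point requiring care is that $u$ must be taken large enough that \emph{all four} shifted points $x\pm u$, $y\pm u$ (not merely some of them) lie in $\{z\in X:\|z\|_d\ge r\}$, which is why the requirement $\|u\|_d\ge r+\|x\|_d+\|y\|_d$ is convenient.
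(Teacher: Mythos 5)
Your proposal is correct and follows essentially the same route as the paper: the same four Jensen differences at $(x+u,y-u)$, $(x-u,y+u)$, $(x+u,x-u)$, $(y+u,y-u)$ with $\|u\|_d\geq r+\|x\|_d+\|y\|_d$, and the same three-point extremal example $f(0)=\tfrac{\varepsilon}{2}$, $f(\pm t)=-\tfrac{\varepsilon}{2}$ for the sharpness of $4\varepsilon$. Nothing is missing.
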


\begin{proof}
Let $\eta >\varepsilon$ be arbitrary. Then, applying \eqref{e1t2}, there exists $r>0$ such that, 
for all $x,y\in X$ with $\|x\|_d\geq r$, $\|y\|_d\geq r$,
\[
\Big\|2f\Big(\frac{x+y}{2}\Big)-f(x)-f(y)\Big\|_{\rho}< \eta.
\]
Let us fix $x,y\in X$ and choose $u\in X$ such that
\[
\|u\|_d\geq r+\|x\|_d+\|y\|_d.
\]
Then, as $\min\big(\|x+u\|_d, \|y+u\|_d, \|x-u\|_d, \|y-u\|_d\big)\geq r$,
we get the following four inequalities
\begin{align*}
 \Big\|2f\Big(\frac{x+y}{2}\Big)-f(x+u)-f(y-u)\Big\|_{\rho}< &\eta, \\
\Big\|2f\Big(\frac{x+y}{2}\Big)-f(x-u)-f(y+u)\Big\|_{\rho}< &\eta, \\
\|f(x+u)+f(x-u)-2f(x)\|_{\rho}< &\eta, \\
\|f(y+u)+f(y-u)-2f(y)\|_{\rho}< &\eta.
\end{align*}
Thus, by the triangle inequality, we obtain
\begin{align*}
\lefteqn{\Big\|4f\Big(\frac{x+y}{2}\Big)-2f(x)-2f(y)\Big\|_{\rho}}\hspace{2em}\\
&\leq \Big\|2f\Big(\frac{x+y}{2}\Big)-f(x+u)-f(y-u)\Big\|_{\rho} +
\Big\|2f\Big(\frac{x+y}{2}\Big)-f(x-u)-f(y+u)\Big\|_{\rho}\\
&\quad{}+\|f(x+u)+f(x-u)-2f(x)\|_{\rho}+\|f(y+u)+f(y-u)-2f(y)\|_{\rho}\\
&< 4\eta.
\end{align*}
Upon taking the limit $\eta \to \varepsilon$, the desired inequality \eqref{e2t2} results.

To verify that $4\varepsilon$ is the best constant in \eqref{e2t2}, 
it suffices to consider the function $f \colon X\to \R$ defined by
\[
f(x)=\begin{cases}
0, & \mbox{for } x\in X\setminus\{x_0,-x_0, 0\},\\
-\frac{\varepsilon}{2}, & \mbox{for } x\in \{x_0,-x_0\},\\
+\frac{\varepsilon}{2}, & \mbox{for } x=0,
\end{cases}
\]
where $x_0\in X\setminus\{0\}$ is an arbitrary element. 
Then, for all $\|x\|_d, \|y\|_d > \|x_0\|_d$,
\[
\Big|2f\Big(\frac{x+y}{2}\Big)-f(x)-f(y)\Big|\in \{0,\varepsilon\},
\]
wich shows that \eqref{e1t2} is satisfied. On the other hand,
\[
\sup_{x,y\in X}\Big|4f\Big(\frac{x+y}{2}\Big)-2f(x)-2f(y)\Big|
\geq |4f(0)-2f(x_0)-2f(-x_0)|=4\varepsilon,
\]
proving that the constant on the right hand side of \eqref{e2t2} cannot be smaller than $4\varepsilon$.
\end{proof}

Taking $\varepsilon=0$ in Theorems \ref{t1} and \ref{t2}, we can directly obtain the following corollaries.

\begin{cor}
Let $(X,+, d)$ and $(Y,+,\rho)$ be metric abelian groups such that $X$ is
unbounded by $d$. If $f \colon  X \to Y$  satisfies
\[
\limsup_{\min(\|x\|_d, \|y\|_d)\to \infty} \|f(x+y)-f(x)-f(y)\|_{\rho}=0,
\]
then
 \[
 f(x+y)=f(x)+f(y),  \qquad x,y \in X.
 \]
\end{cor}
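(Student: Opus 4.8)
The plan is to obtain this corollary as the $\varepsilon=0$ instance of Theorem~\ref{t1}. First I would observe that the assumption
\[
\limsup_{\min(\|x\|_d, \|y\|_d)\to \infty} \|f(x+y)-f(x)-f(y)\|_{\rho}=0
\]
is exactly condition \eqref{e1t1} with $\varepsilon=0$: an equality to $0$ certainly implies that the $\limsup$ is $\le 0$. Since $X$ is unbounded by $d$, the hypotheses of Theorem~\ref{t1} are met with $\varepsilon=0$, and its conclusion \eqref{e2t1} reads $\|f(x+y)-f(x)-f(y)\|_{\rho}\le 5\cdot 0=0$ for all $x,y\in X$.

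The second and final step is to pass from this norm inequality to the equality $f(x+y)=f(x)+f(y)$. Here I would use the fact, recalled in the introduction, that $\rho$ is a genuine (translation invariant) metric on $Y$, so the induced norm $\|\cdot\|_{\rho}$ satisfies $\|z\|_{\rho}=0$ if and only if $z=0$. Taking $z=f(x+y)-f(x)-f(y)$, the bound $\|z\|_{\rho}\le 0$ forces $z=0$, i.e., $f(x+y)=f(x)+f(y)$ for all $x,y\in X$, as claimed.

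There is essentially no obstacle: all of the analytic content has already been carried out in the proof of Theorem~\ref{t1}, and the remaining ingredient — that a metric separates points — is part of the definition of a metric abelian group. The only thing worth double-checking is that the standing assumption ``$X$ is unbounded by $d$'' in the corollary matches verbatim the corresponding assumption of Theorem~\ref{t1}, which it does, so the application is legitimate.
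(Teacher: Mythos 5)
Your proposal is correct and is exactly the paper's route: the authors obtain the corollary by taking $\varepsilon=0$ in Theorem~\ref{t1}, and the final step that $\|z\|_{\rho}\le 0$ forces $z=0$ is immediate since $\rho$ is a metric.
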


\begin{cor}
Let $(X,+, d)$ and $(Y,+, \rho)$ be metric abelian groups such that $X$ is
uniquely 2-divisible and unbounded by $d$.
If $f\colon X\to Y$ satisfies
\[
\limsup_{\min(\|x\|_d, \|y\|_d)\to \infty} \Big\|2f\Big(\frac{x+y}{2}\Big)-f(x)-f(y)\Big\|_{\rho}=0,
\]
 then
\[
 4f\Big(\frac{x+y}{2}\Big)=2f(x)+2f(y),  \qquad x,y \in X.
\]
\end{cor}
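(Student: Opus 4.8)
The plan is to deduce this corollary directly from Theorem~\ref{t2}, rather than to repeat any argument. First I would note that the hypotheses match: $X$ is assumed to be uniquely $2$-divisible (so that $\frac{x+y}{2}$ denotes a well-defined element of $X$) and unbounded by $d$, exactly as required in Theorem~\ref{t2}. Moreover, since $\|\cdot\|_\rho=\rho(\cdot,0)$ is nonnegative, the stated condition
\[
\limsup_{\min(\|x\|_d,\|y\|_d)\to\infty}\Big\|2f\Big(\frac{x+y}{2}\Big)-f(x)-f(y)\Big\|_\rho=0
\]
is nothing but the asymptotic stability property \eqref{e1t2} with $\varepsilon=0$.

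Next I would apply Theorem~\ref{t2} with $\varepsilon=0$. Its conclusion \eqref{e2t2} then reads
\[
\Big\|4f\Big(\frac{x+y}{2}\Big)-2f(x)-2f(y)\Big\|_\rho\leq 0\qquad(x,y\in X).
\]
Because the norm induced by a metric is nonnegative and vanishes only at $0$, this inequality forces
\[
4f\Big(\frac{x+y}{2}\Big)-2f(x)-2f(y)=0
\]
in the group $Y$, which is precisely the assertion of the corollary.

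I do not anticipate any real obstacle here, since the statement is a pure specialization of Theorem~\ref{t2}; the only point worth making explicit is that an equality of two $\limsup$-type quantities to $0$, together with nonnegativity of $\|\cdot\|_\rho$, genuinely reproduces hypothesis \eqref{e1t2} for $\varepsilon=0$. As an alternative one could inline the proof: fix $x,y\in X$, choose an auxiliary $u$ of sufficiently large norm, run the same four-term triangle-inequality estimate as in the proof of Theorem~\ref{t2} with an arbitrary $\eta>0$ in place of $\varepsilon$, and let $\eta\to0$; but invoking the theorem directly is shorter and is exactly the route suggested by the remark preceding the corollary.
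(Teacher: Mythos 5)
Your proposal is correct and is exactly the route the paper takes: the corollary is obtained by setting $\varepsilon=0$ in Theorem~\ref{t2} and using that $\|\cdot\|_\rho$ vanishes only at $0$. Nothing further is needed.
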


\section{Results on the hyperstability}

\begin{thm}		\label{t5}	
Let $(X,+, d)$ and $(Y,+,\rho)$ be metric abelian groups such that $2X$ is 
unbounded by $d$. Let $\varphi\colon\R_+:=[0,\infty)\to\R$ such that $\lim_{t\to\infty}\varphi(t)=\infty$ and
let $f\colon X\to Y$ satisfy
\begin{equation} \label{hipC}
\limsup_{\min(\|x\|_d, \|y\|_d)\to \infty} \varphi(\|x-y\|_d)\cdot\|f(x+y)-f(x)-f(y)\|_{\rho}<\infty,
\end{equation}
then
\[
f(x+y)=f(x)+f(y)\qquad  \mbox{for all } x,y \in X.
\]
\end{thm}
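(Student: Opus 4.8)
The plan is to derive from the finiteness of the weighted limsup in~\eqref{hipC} that the \emph{unweighted} asymptotic stability property~\eqref{e1t1} of Theorem~\ref{t1} holds with $\varepsilon=0$, so that the conclusion will follow from the Corollary after Theorem~\ref{t1}. Let $M<\infty$ denote the value of the limsup in~\eqref{hipC}. Fix any $\delta>0$. The strategy is to show that $\|f(x+y)-f(x)-f(y)\|_\rho<\delta$ whenever $\|x\|_d$ and $\|y\|_d$ are large enough; since $\delta$ is arbitrary, this yields exactly the hypothesis of the first Corollary.

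The key observation is that a single pair $(x,y)$ can be replaced by another pair $(x',y')$ with the \emph{same} sum $x'+y'=x+y$ but with $\|x'-y'\|_d$ as large as we please, which forces the error term to be small because $\varphi(\|x'-y'\|_d)$ is large. Concretely, given $x,y$ with $\min(\|x\|_d,\|y\|_d)$ large, I would use the unboundedness of $2X$ to pick $w\in X$ with $\|2w\|_d$ huge, and set $x'=x+w$, $y'=y-w$. Then $x'+y'=x+y$, so $f(x'+y')-f(x')-f(y')$ has norm that we want to bound; provided $\|x'\|_d,\|y'\|_d$ are still large (which holds once $\|w\|_d$ itself is large compared to $\|x\|_d,\|y\|_d$, since $\|x'\|_d\ge\|w\|_d-\|x\|_d$), the near-maximality of the limsup gives $\varphi(\|x'-y'\|_d)\cdot\|f(x'+y')-f(x')-f(y')\|_\rho\le M+1$, say. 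Now $x'-y'=(x-y)+2w$, and by choosing $\|2w\|_d$ large we can make $\|x'-y'\|_d$ large, hence $\varphi(\|x'-y'\|_d)$ large, hence $\|f(x+y)-f(x')-f(y')\|_\rho\le (M+1)/\varphi(\|x'-y'\|_d)$ small. This controls $f(x+y)-f(x')-f(y')$, not $f(x+y)-f(x)-f(y)$; to close the gap I would combine this with two further applications of the same trick to the pairs $(x,w)\mapsto\cdots$ — more precisely, I would express
\[
f(x+y)-f(x)-f(y)=\big(f(x+y)-f(x')-f(y')\big)-\big(f(x+w)-f(x)-f(w)\big)+\big(f(y)-f(y-w)-f(w)\big)+\big(\text{correction}\big),
\]
where each parenthesized block is a Cauchy difference evaluated at a pair of large-norm arguments whose \emph{difference} can independently be made large (for $(x,w)$ the difference is $x-w$, with $\|x-w\|_d\ge\|w\|_d-\|x\|_d$; similarly for $(y-w,w)$ the difference is $y-2w$). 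By first fixing $x,y$, then choosing $w$ with $\|w\|_d$ and $\|2w\|_d$ both large enough, every such block has norm $\le(M+1)/\varphi(\text{large})<\delta/4$, and the triangle inequality gives $\|f(x+y)-f(x)-f(y)\|_\rho<\delta$.

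A cleaner bookkeeping, which I would actually adopt, is the following: for fixed $x,y$ with large norms, and for $w$ with $\|w\|_d$ large, write the telescoping identity
\[
f(x+y)-f(x)-f(y)=\big(f(x+y)-f(x+y-w)-f(w)\big)-\big(f(x)-f(x-w)-f(w)\big)-\big(f(y)-f(y-w)-f(w)\big)+\big(f(x+y-w)-f(x-w)-f(y-w)-f(w)\big),
\]
noting that the last bracket is itself a difference of two Cauchy differences. Each Cauchy difference occurring here is evaluated at a pair of arguments all of whose norms exceed $r$ (once $\|w\|_d\ge r+2(\|x\|_d+\|y\|_d)$, say), and — crucially — in each such pair one can additionally arrange the relevant "difference of the two summands" to have norm tending to infinity as $\|w\|_d,\|2w\|_d\to\infty$: for the pair $(x+y-w,w)$ it is $x+y-2w$; for $(x-w,w)$ it is $x-2w$; for $(y-w,w)$ it is $y-2w$; for $(x-w,y-w)$ it is $x-y$, which we instead handle by a preliminary substitution $w\mapsto w+z$ with $\|2z\|_d$ large. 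Applying~\eqref{hipC} to each and using $\varphi\to\infty$, every block is $<\delta/(\text{number of blocks})$ once $w$ (and the auxiliary $z$) are chosen large enough; summing gives $\|f(x+y)-f(x)-f(y)\|_\rho<\delta$.

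The main obstacle is precisely the term whose two summands differ by the \emph{fixed} quantity $x-y$, for which the weight $\varphi(\|x-y\|_d)$ cannot be made large. The resolution — and the only genuinely delicate point of the argument — is to absorb this by an extra translation: replace every auxiliary variable $w$ by $w+z$ where $z$ ranges over $X$ with $\|2z\|_d\to\infty$, so that the troublesome difference becomes $x-y$ plus something large, or alternatively route the identity through a pair whose summand-difference is $2w$ (using that $2X$, not merely $X$, is unbounded, which is exactly why the hypothesis is phrased in terms of $2X$). Once one sees that the hypothesis "$2X$ unbounded'' is tailor-made to supply differences of the form $2w$ with large norm, the rest is a routine triangle-inequality estimate and a limit passage $\delta\to0$, after which the first Corollary applies verbatim to conclude $f(x+y)=f(x)+f(y)$ for all $x,y\in X$.
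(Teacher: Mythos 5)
Your overall strategy --- showing that the \emph{unweighted} Cauchy difference becomes arbitrarily small for large arguments and then invoking Theorem~\ref{t1} (with $\varepsilon=0$) to get additivity everywhere --- is viable, and it would even be a legitimate alternative to the paper's argument, which instead fixes an \emph{arbitrary} pair $x,y\in X$ and bounds $\|f(x+y)-f(x)-f(y)\|_\rho$ directly by the same five-term decomposition as in Theorem~\ref{t1}. But the execution has a genuine gap at exactly the point you yourself flag as the delicate one. In your ``cleaner bookkeeping'' identity the last bracket forces you through the Cauchy difference of the pair $(x-w,\,y-w)$, whose two entries differ by the \emph{fixed} element $x-y$, so the weight $\varphi(\|x-y\|_d)$ gives no decay. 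Your proposed remedy --- ``replace every auxiliary variable $w$ by $w+z$ with $\|2z\|_d$ large'' --- does not repair this: translating \emph{both} entries of that pair by the same element $z$ produces the pair $(x-w-z,\,y-w-z)$, whose difference is still exactly $x-y$, not ``$x-y$ plus something large''. The alternative you mention (routing through a pair whose summand-difference is $2w$) is indeed the right idea, but it is never carried out; and your first displayed identity has wrong signs (the unspecified ``correction'' equals $2\bigl(f(x+w)+f(y-w)-f(x)-f(y)\bigr)$, which is not negligible), so no valid decomposition in which \emph{every} pair has both entries of norm $\geq r$ and difference of norm $\geq R$ is actually produced. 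Note also that if you split the fourth bracket through $f(x+y-2w)$ you additionally meet the pair $(x+y-2w,\,w)$ with difference $x+y-3w$, and unboundedness of $2X$ gives no control on $\|3w\|_d$ (think of a group of exponent $3$).

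The correct device, which is what the paper uses, is to shift $x$ and $y$ by \emph{two different} auxiliary elements $u,v$ chosen successively with $\|2v\|_d$ much larger than $\|u\|_d+\|x\|_d+\|y\|_d$: then the mixed pair $(x-u,\,y-v)$ has difference $x-y-u+v$ of large norm, and all five pairs $(x-u,u)$, $(y-v,v)$, $(x-u,y-v)$, $(u,v)$, $(x+y-u-v,u+v)$ have entries of norm $\geq r$ and differences ($x-2u$, $y-2v$, $x-y-u+v$, $u-v$, $x+y-2(u+v)$) of norm $\geq R$; this yields the bound $\varepsilon$ for \emph{all} $x,y$ at once, with no appeal to the Corollary. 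Within your reduction-to-large-arguments framework a simpler repair is available, since there you may assume $\|x\|_d,\|y\|_d\geq r$ and hence may use $x$ or $y$ themselves as entries: the three-term identity $f(x+y)-f(x)-f(y)=\bigl(f(x+y)-f(x+w)-f(y-w)\bigr)+\bigl(f(x+w)-f(x)-f(w)\bigr)-\bigl(f(y)-f(y-w)-f(w)\bigr)$ uses only the pairs $(x+w,y-w)$, $(x,w)$, $(y-w,w)$, with differences $x-y+2w$, $x-w$, $y-2w$, all made large by taking $\|2w\|_d$ large. Either completion is routine, but as written your proof stops short of it at the decisive step.
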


\begin{proof}
According to \eqref{hipC}, there exist constants $r>0$ and $K>0$ such that, for $x,y\in X$ with $\|x\|_d\geq r$ and
$\|y\|_d\geq r$,
\[
  \varphi(\|x-y\|_d)\cdot\|f(x+y)-f(x)-f(y)\|_{\rho}<K.
\]
Let $\varepsilon>0$ be arbitrary and choose $R>0$ such that, for all $t\geq R$,
\[
  \varphi(t)\geq \frac{5K}{\varepsilon}.
\]
Then, for $x,y\in X$ with $\|x\|_d\geq r$, $\|y\|_d\geq r$ and $\|x-y\|_d\geq R$, we have
\begin{equation} \label{hipD}
  \|f(x+y)-f(x)-f(y)\|_{\rho}<\frac{\varepsilon}{5}. 
\end{equation}
Let $x,y \in X$ be fixed. In view of the unboundedness of $2X$, we can choose $u\in X$ such that
\[
\|2u\|_d\geq 2r+R+2\|x\|_d
\]
and then we can choose $v\in X$ satysfying
\[
\|2v\|_d\geq 2r+2R+2\|x\|_d+2\|y\|_d+2\|u\|_d.
\]
Then, using the trivial inequality $\|2z\|_d\leq2\|z\|_d$ (which is the consequence of the subadditivity of 
$\|\cdot\|_d$), we easily get
\begin{align*}
  \min\big(\|u\|_d,\|v\|_d,\|u+v\|_d,\|x-u\|_d,\|y-v\|_d,\|x+y-u-v\|_d\big) & \geq r, \\
  \min\big(\|x-2u\|_d,\|y-2v\|_d,\|x-y-u+v\|_d,\|u-v\|_d,\|x+y-2(u+v)\|_d\big)&\geq R.
\end{align*}
Thus, using inequality \eqref{hipD} on the domain indicated, we obtain the following five inequalities
\begin{align*}
\|f(x-u)+f(u)-f(x)\|_{\rho}< &\frac{\varepsilon}{5}, \\
\|f(y-v)+f(v)-f(y)\|_{\rho}< &\frac{\varepsilon}{5}, \\
\|f(x+y-u-v)-f(x-u)-f(y-v)\|_{\rho}< &\frac{\varepsilon}{5}, \\
\|f(u+v)-f(u)-f(v)\|_{\rho}< &\frac{\varepsilon}{5}, \\
\|f(x+y)-f(x+y-u-v)-f(u+v)\|_{\rho}< &\frac{\varepsilon}{5}.
\end{align*}
Adding up these inequalities side by side and using the triangle inequality (as in the proof of Theorem \ref{t1}),
we get
\[
  \|f(x+y)-f(x)-f(y)\|_d\leq \varepsilon.
\]
Since $\varepsilon>0$ was arbitrary, this inequality implies that $f$ is additive.
\end{proof}

We cannot omit the assumption that $2X$ is unbounded by $d$  in Theorem \ref{t5}. 
Indeed, if $\sup_{x\in X}\|2x\|_d=K<\infty$, then the function $f\colon X\to X$ defined by
\[
f(x)=\begin{cases}
x, & \quad  x\in X\setminus\{0\},\\
a, & \quad  x=0,
\end{cases}
\]
where $a\neq 0$, for $x,y\in X\setminus\{0\}$ satisfies
\[
  \|x-y\|_d\cdot\|f(x+y)-f(x)-f(y)\|_{d}
  =\begin{cases}
  0 & \mbox{if } x+y\neq0,\\
  \|2x\|_d\cdot \|a\|_d & \mbox{if } x+y=0.\\
  \end{cases}
\]
Hence
\[
\limsup_{\min(\|x\|_d, \|y\|_d)\to \infty} \|x-y\|_d\cdot\|f(x+y)-f(x)-f(y)\|_{d}\leq K\cdot\|a\|_d<\infty,
\]
which means that \eqref{hipC} is fulfilled with $\varphi(t):=t$. On the other hand, $f$ is not additive.

\begin{thm}	
Let $(X,+, d)$ and $(Y,+, \rho)$ be metric abelian groups such that $X$ is uniquely 2-divisible and $2X$ is unbounded 
by $d$. Let $\varphi\colon\R_+\to\R$ such that $\lim_{t\to\infty}\varphi(t)=\infty$ and let $f\colon X\to Y$ satisfy 
\begin{equation} \label{je}
\limsup_{\min(\|x\|_d, \|y\|_d)\to \infty}  
\varphi(\|x-y\|_d)\cdot\Big\|2f\Big(\frac{x+y}{2}\Big)-f(x)-f(y)\Big\|_{\rho} < \infty,
\end{equation}
then
\begin{equation} \label{rj}
4f\Big(\frac{x+y}{2}\Big)=2f(x)+2f(y) \qquad \mbox{for all } x,y \in X.
\end{equation}
\end{thm}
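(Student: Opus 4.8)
The plan is to imitate the proof of Theorem~\ref{t5}, replacing the five Cauchy-type inequalities with four Jensen-type ones, exactly as in the passage from Theorem~\ref{t1} to Theorem~\ref{t2}. From \eqref{je} we obtain constants $r>0$ and $K>0$ so that $\varphi(\|x-y\|_d)\cdot\|2f(\tfrac{x+y}{2})-f(x)-f(y)\|_\rho<K$ whenever $\|x\|_d\geq r$ and $\|y\|_d\geq r$. Fixing an arbitrary $\varepsilon>0$ and using $\lim_{t\to\infty}\varphi(t)=\infty$, choose $R>0$ with $\varphi(t)\geq 4K/\varepsilon$ for $t\geq R$; then for all $x,y$ with $\|x\|_d,\|y\|_d\geq r$ and $\|x-y\|_d\geq R$ we get the sharpened estimate $\|2f(\tfrac{x+y}{2})-f(x)-f(y)\|_\rho<\varepsilon/4$.

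Next I would fix $x,y\in X$ and exploit the unboundedness of $2X$ to pick $u\in X$ with $\|2u\|_d$ large — specifically $\|2u\|_d\geq 2r+R+2\|x\|_d+2\|y\|_d$ should suffice, since the four substitutions needed are the pairs $(x+u,y-u)$, $(x-u,y+u)$, $(x+u,x-u)$, $(y+u,y-u)$, and one checks via $\|2z\|_d\leq 2\|z\|_d$ and translation invariance that each of $\|x\pm u\|_d$, $\|y\pm u\|_d$ is at least $r$ while the relevant differences, namely $(x+u)-(y-u)=x-y+2u$, $(x-u)-(y+u)=x-y-2u$, $(x+u)-(x-u)=2u$, $(y+u)-(y-u)=2u$, all have norm at least $R$. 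Applying the sharpened estimate four times yields
\begin{align*}
\Big\|2f\Big(\tfrac{x+y}{2}\Big)-f(x+u)-f(y-u)\Big\|_\rho&<\tfrac{\varepsilon}{4},\\
\Big\|2f\Big(\tfrac{x+y}{2}\Big)-f(x-u)-f(y+u)\Big\|_\rho&<\tfrac{\varepsilon}{4},\\
\|f(x+u)+f(x-u)-2f(x)\|_\rho&<\tfrac{\varepsilon}{4},\\
\|f(y+u)+f(y-u)-2f(y)\|_\rho&<\tfrac{\varepsilon}{4},
\end{align*}
and summing with the triangle inequality exactly as in the proof of Theorem~\ref{t2} gives $\|4f(\tfrac{x+y}{2})-2f(x)-2f(y)\|_\rho\leq\varepsilon$. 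Since $\varepsilon>0$ is arbitrary, \eqref{rj} follows.

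The main obstacle — really the only point requiring care — is the bookkeeping that verifies that a single good choice of $u$ simultaneously forces all four ``first-coordinate'' arguments to have norm $\geq r$ and all four ``difference'' arguments to have norm $\geq R$; here one must use the triangle inequality for $\|\cdot\|_d$ in both directions (e.g. $\|x-y+2u\|_d\geq\|2u\|_d-\|x\|_d-\|y\|_d$) together with $\|2u\|_d\leq 2\|u\|_d$, and it is worth noting that, unlike in Theorem~\ref{t2}, we cannot use $u$ itself as the shift but must use $2u$ in the key differences because only $2X$, not $X$, is assumed unbounded — this is precisely the feature that makes the statement strictly more general than Theorem~\ref{t5}'s Jensen analogue would be under the stronger hypothesis. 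I expect no genuine difficulty beyond this; the argument is a routine adaptation once the norm estimates are lined up.
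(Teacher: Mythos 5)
Your proposal is correct and coincides with the paper's own proof: the same constants $r,K,R$, the same choice of $u$ with $\|2u\|_d\geq 2r+R+2\|x\|_d+2\|y\|_d$, the same four substitutions $(x+u,y-u)$, $(x-u,y+u)$, $(x+u,x-u)$, $(y+u,y-u)$, and the same triangle-inequality conclusion. The norm bookkeeping you flag is exactly the verification the paper records, so nothing further is needed.
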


\begin{proof}
Similarly as in the proof of Theorem \ref{t5}, it follows from \eqref{je} that there exist constants $r>0$ and $K>0$ 
such that, for $x,y\in X$ with $\|x\|_d\geq r$ and $\|y\|_d\geq r$,
\[
  \varphi(\|x-y\|_d)\cdot\Big\|2f\Big(\frac{x+y}{2}\Big)-f(x)-f(y)\Big\|_{\rho}<K.
\]
Let $\varepsilon>0$ be arbitrary and choose $R>0$ such that, for all $t\geq R$,
\[
  \varphi(t)\geq \frac{4K}{\varepsilon}.
\]
Then, for $x,y\in X$ with $\|x\|_d\geq r$, $\|y\|_d\geq r$ and $\|x-y\|_d\geq R$, we have
\begin{equation} \label{ji}
  \Big\|2f\Big(\frac{x+y}{2}\Big)-f(x)-f(y)\Big\|_{\rho}<\frac{\varepsilon}{4}.
\end{equation}
Let $x,y\in X$ be fixed and, using the unboundedness of $2X$, choose $u\in X$ such that
\[
  \|2u\|_d\geq 2r+R+2\|x\|_d+2\|y\|_d.
\]
Then, by the triangle inequality, it follows that
\begin{align*}
\min\big(\|x+u\|_d, \|y+u\|_d, \|x-u\|_d, \|y-u\|_d\big)&\geq r,\\
\min\big(\|2u\|_d,  \|x-y+2u\|_d, \|x-y-2u\|_d&\geq R.
\end{align*}
Applying these conditions for the appropriate choice of variables, \eqref{ji} implies the 
following four inequalities:
\begin{align*}
\Big\|2f\Big(\frac{x+y}{2}\Big)-f(x+u)-f(y-u)\Big\|_{\rho}< &\frac{\varepsilon}{4},\\
\Big\|2f\Big(\frac{x+y}{2}\Big)-f(x-u)-f(y+u)\Big\|_{\rho}< &\frac{\varepsilon}{4},\\
\|f(x+u)+f(x-u)-2f(x)\|_{\rho}< &\frac{\varepsilon}{4},\\
\|f(y+u)+f(y-u)-2f(y)\|_{\rho}< &\frac{\varepsilon}{4}.
\end{align*}
Thus
\[
 \Big\|4f\Big(\frac{x+y}{2}\Big)-2f(x)-2f(y)\Big\|_{\rho}< \varepsilon
\]
and as $\varepsilon>0$ was arbitrary, \eqref{rj} holds.
\end{proof}

We complete this paper by providing an example of an unbounded metric abelian group $X$ such that $2X$ is bounded.

\begin{ex}
Let
\[
X:=\{(a_1,a_2, \ldots):\  a_i\in \{0,1\}, \ i\in \N,\ \exists_{n_0\in\N} \forall_{i\geq n_0}\, a_i=0\}.
\]
For $(a_1,a_2, \ldots), (b_1,b_2, \ldots)\in X$, we define
\[
(a_1,a_2, \ldots) + (b_1,b_2, \ldots)=((a_1+_2 b_1, a_2+_2 b_2, \ldots),
\]
(where the operation $+_2$ is the additon modulo 2 on $\{0,1\}$) and
\[
d((a_1,a_2, \ldots), (b_1,b_2, \ldots)) =\sum_{i=1}^{\infty}\frac{a_i+_2 b_i}{i}.
\]
Then $(X,+,d)$ is an unbounded metric abelian group but $2X=\{(0,0,\ldots)\}$, hence $2X$ is trivially 
bounded.
\end{ex}


\begin{thebibliography}{10}

\bibitem{Aok50}
T.~Aoki.
\newblock {On the stability of the linear transformation in {B}anach spaces}.
\newblock {\em J. Math. Soc. Japan}, 2:64–66, 1950.

\bibitem{Bad93}
R.~Badora.
\newblock {On some generalized invariant means and their application to the
  stability of the {H}yers-{U}lam type}.
\newblock {\em Ann. Polon. Math.}, 58(2):147–159, 1993.

\bibitem{BadGerPal03}
R.~Badora, R.~Ger, and Zs. Páles.
\newblock {Additive selections and the stability of the {C}auchy functional
  equation}.
\newblock {\em ANZIAM J.}, 44(3):323–337, 2003.

\bibitem{BadPalSze99}
R.~Badora, Zs. Páles, and L.~Székelyhidi.
\newblock {Monomial selections of set-valued maps}.
\newblock {\em Aequationes Math.}, 58(3):214–222, 1999.

\bibitem{BahPis14}
A.~Bahyrycz and M.~Piszczek.
\newblock {Hyperstability of the {J}ensen functional equation}.
\newblock {\em Acta Math. Hungar.}, 142(2):353–365, 2014.

\bibitem{BarVol02}
K.~Baron and P.~Volkmann.
\newblock {On functions close to homomorphisms between square symmetric
  structures}.
\newblock {\em Seminar LV
  (http://www.mathematik.uni-karlsruhe.de/$\sim$semlv/)}, (14):12 pp.
  (electronic), 2002.

\bibitem{BorPalVol00}
Z.~Boros, Zs. Páles, and P.~Volkmann.
\newblock {On stability for the {J}ensen equation on intervals}.
\newblock {\em Aequationes Math.}, 60:291–297, 2000.

\bibitem{Bou51a}
D.~G. Bourgin.
\newblock {Classes of transformations and bordering transformations}.
\newblock {\em Bull. Amer. Math. Soc.}, 57:223–237, 1951.

\bibitem{BrzChuPal11}
J.~Brzdęk, J.~Chudziak, and Zs. Páles.
\newblock {A fixed point approach to stability of functional equations}.
\newblock {\em Nonlinear Anal.}, 74(17):6728–6732, 2011.

\bibitem{Cze02}
S.~Czerwik.
\newblock {\em {Functional equations and inequalities in several variables}}.
\newblock World Scientific Publishing Co., Inc., River Edge, NJ, 2002.

\bibitem{CadRad02}
L.~Cădariu and V.~Radu.
\newblock {The stability of {J}ensen's functional equation: a fixed point
  approach}.
\newblock {\em Automat. Comput. Appl. Math.}, 11(1):27–32 (2003), 2002.

\bibitem{CadRad03}
L.~Cădariu and V.~Radu.
\newblock {Fixed points and the stability of {J}ensen's functional equation}.
\newblock {\em JIPAM. J. Inequal. Pure Appl. Math.}, 4(1), 2003.

\bibitem{For95}
G.-L. Forti.
\newblock {Hyers-{U}lam stability of functional equations in several
  variables}.
\newblock {\em Aequationes Math.}, 50(1-2):143–190, 1995.

\bibitem{For04}
G.-L. Forti.
\newblock {Comments on the core of the direct method for proving {H}yers-{U}lam
  stability of functional equations}.
\newblock {\em J. Math. Anal. Appl.}, 295(1):127–133, 2004.

\bibitem{Ger94b}
R.~Ger.
\newblock {A survey of recent results on stability of functional equations}.
\newblock In {\em {Proc. of the 4th International Conference on Functional
  Equations and Inequalities}}, page 5–36, Cracow, 1994. Pedagogical
  University of Cracow.

\bibitem{Hye41a}
D.~H. Hyers.
\newblock {On the stability of the linear functional equation}.
\newblock {\em Proc. Natl. Acad. Sci. U.S.A.}, 27:222–224, 1941.

\bibitem{HyeIsaRas98}
D.~H. Hyers, G.~Isac, and Th.~M. Rassias.
\newblock {\em {Stability of functional equations in several variables}}.
\newblock {Progress in Nonlinear Differential Equations and their Applications,
  34}. Birkhäuser Boston Inc., Boston, MA, 1998.

\bibitem{Jun98}
S.-M. Jung.
\newblock {Hyers-{U}lam-{R}assias stability of {J}ensen's equation and its
  application}.
\newblock {\em Proc. Amer. Math. Soc.}, 126(11):3137–3143, 1998.

\bibitem{Kom89a}
Z.~Kominek.
\newblock {On a local stability of the {J}ensen functional equation}.
\newblock {\em Demonstratio Math.}, 22(2):499–507, 1989.

\bibitem{Los96}
L.~Losonczi.
\newblock {On the stability of {H}osszú's functional equation}.
\newblock {\em Results Math.}, 29(3-4):305–310, 1996.

\bibitem{Mal08}
L.~Maligranda.
\newblock {A result of {T}osio {A}oki about a generalization of {H}yers-{U}lam
  stability of additive functions—a question of priority}.
\newblock {\em Aequationes Math.}, 75(3):289–296, 2008.

\bibitem{NikPalWas99}
K.~Nikodem, Zs. Páles, and Sz. Wąsowicz.
\newblock {Abstract separation theorems of {R}odé type and their
  applications}.
\newblock {\em Ann. Polon. Math.}, 72(3):207–217, 1999.

\bibitem{Pal98b}
Zs. Páles.
\newblock {Generalized stability of the {C}auchy functional equation}.
\newblock {\em Aequationes Math.}, 56(3):222–232, 1998.

\bibitem{PalVolLuc98}
Zs. Páles, P.~Volkmann, and R.~D. Luce.
\newblock {Hyers-{U}lam stability of functional equations with a
  square-symmetric operation}.
\newblock {\em Proc. Natl. Acad. Sci. USA}, 95(22):12772–12775 (electronic),
  1998.

\bibitem{PolSze25}
Gy. Pólya and G.~Szegő.
\newblock {\em {Aufgaben und {L}ehrsätze aus der {A}nalysis, {V}ol. {I}}}.
\newblock Springer-Verlag, Berlin, 1925.

\bibitem{Ras78}
Th.~M. Rassias.
\newblock {On the stability of the linear mapping in {B}anach spaces}.
\newblock {\em Proc. Amer. Math. Soc.}, 72(2):297–300, 1978.

\bibitem{Sko83c}
F.~Skof.
\newblock {On the approximation of locally {$\delta$}-additive mappings}.
\newblock {\em Atti Accad. Sci. Torino Cl. Sci. Fis. Mat. Natur.},
  117(4-6):377–389 (1986), 1983.

\bibitem{Sza13a}
Á. Száz.
\newblock {An instructive treatment and some natural extensions of a set-valued
  function of {P}áles}.
\newblock {\em Math. Pannon.}, 24(1):77–108, 2013.

\bibitem{Sze87c}
L.~Székelyhidi.
\newblock {Remarks on {H}yers's theorem}.
\newblock {\em Publ. Math. Debrecen}, 34(1-2):131–135, 1987.

\bibitem{TabTab98}
Ja. Tabor and Jó. Tabor.
\newblock {Stability of the {C}auchy equation on an interval}.
\newblock {\em Aequationes Math.}, 55(1-2):153–176, 1998.

\bibitem{Ula60}
S.~M. Ulam.
\newblock {\em {A {Co}llection of {M}athematical {P}roblems}}.
\newblock {Interscience Tracts in Pure and Applied Mathematics, no. 8}.
  Interscience Publishers, New York-London, 1960.

\end{thebibliography}

\end{document}